\title{New bounds on the signed total domination number of graphs}
\author {\\
S.M. Hosseini Moghaddam\thanks{corresponding author}\\
Qom Branch, Islamic Azad University\\
Qom, Iran\\
{\tt sm.hosseini1980@yahoo.com}\vspace{3mm}\\
D.A. Mojdeh\\
Department of Mathematics\\
University of Mazandaran, Babolsar, IRI\\
{\tt damojdeh@ipm.ir}\vspace{3mm}\\
Babak Samadi\\
Department of Mathematics\\
Arak University, Arak, IRI\\
{\tt b-samadi@araku.ac.ir}\vspace{3mm}\\
L. Volkmann\\
Lehrstuhl II f\"{u}r Mathematik\\
RWTH Aachen University, 52056 Aachen, Germany\\
{\tt volkm@math2.rwth-aachen.de}\\
}
\date{}
 \newtheorem{theorem}{Theorem}[section]
\newtheorem{lemma}[theorem]{Lemma}
\theoremstyle{definition}
\begin{document}

\maketitle

\begin{abstract}
\noindent In this paper, we study the signed total domination number in graphs and present new sharp lower and upper bounds for this parameter. For example by making use of the classic theorem of Tur\'{a}n \cite{t}, we present a sharp lower bound on $K‎_{r+1}‎$-free graphs for $r‎\geq2‎$. Applying the concept of total limited packing we bound the signed total domination number of $G$ with $‎\delta(G)‎\geq3‎‎$ from above by $n-2‎\lfloor‎\frac{2‎\rho‎_{o}(G)‎‎+‎\delta-3‎}{2}‎‎\rfloor‎‎‎$. Also, we prove that $\gamma_{st}(T)‎\leq n-2(s-s')‎$ for any tree $T$ of order $n$, with $s$ support vertices and $s'$ support vertices of degree two. Moreover, we characterize all trees attaining this bound.
\vspace{3mm}\\
{\bf Keywords:} Open packing, signed total domination number, total limited packing, tuple total domination number.\\
{\bf MSC 2000}: 05C69 
\end{abstract}

\section{Introduction}
Let $G=(V, E)$ be a graph with vertex set $V=V(G)$ of order $n$ and edge set $E=E(G)$. The minimum and maximum degree of $G$ are denoted by $\delta=\delta(G)$ and $\Delta=\Delta(G)$, respectively. For a vertex $v\in V$, $N(v)$ is the open neighborhood of $v$, which is the set of vertices adjacent to $v$ and $N[v]=N(v)\cup \{v \}$ is the closed neighborhood of $v$. Let $[A,B]$ be the set of of edges with end points in both $A$ and $B$. The set of leaves and support vertices of a tree $T$ are denoted by $L(T)$ and $S(T)$, respectively. Also, we consider $L‎_{u}‎$ as the set of all leaves adjacent to the support vertex $u$. We use \cite{we} as a reference for terminology and notation which are not defined here.\\
A set  $S\subseteq V$ is a {\em total dominating set} if each vertex in $V$ is adjacent to at least one vertex in $S$. The {\em total domination number} $\gamma_{t}(G)$ is the minimum cardinality of a total dominating set. In \cite{hs}, Henning and Slater studied  the concept of {\em open packing} in graphs. A subset $B \subseteq V(G)$ is an {\em open packing} in $G$ if for every distinct vertices $u,v‎\in B‎$, $N(u)‎‎\cap N(v)=‎\emptyset‎‎‎$. The {\em open packing number}, $‎\rho‎_{o}(G)‎‎$, is the maximum cardinality of a an open packing in $G$.\\
A generalization of total domination titled {\em$k$-tuple total domination} have been studied by Henning and Kazemi in \cite{hk} (this concept had been studied by Zhao et al. \cite{zwx} as {\em total $k$-domination}). A subset $S\subseteq V$ is a {\em$k$-tuple total dominating set} in $G$ if $|N(v)\cap S|\geq k$, for all $v\in V(G)$. The {\em$k$-tuple total domination number}, $\gamma_{\times k, t}(G)$, is the smallest number of vertices in a $k$-tuple total dominating set.\\
Gallant et al. \cite{gghr} introduced the concept of {\em limited packing}. They exhibited some real-world applications of it to network security, NIMBY, market saturation and codes‎. In fact, A set of vertices $B \subseteq V$ is called a {\em$k$-limited packing set} in $G$ provided that for all $v\in V(G)$, we have $|N[v]\cap B|\leq k$. The {\em$k$-limited packing number}, denoted $L_{k}(G)$, is the largest number of vertices in a $k$-limited packing set. We can consider the concept of limited packing as dual of tuple domination in a graph. For more information the reader can consult \cite{msh}.\\
The above discussions give us a motivation to introduce the concept of {\em total limited packing} in graphs. Let $G$ be a graph, and $k\in N$. A set of vertices $L\subseteq V(G)$ is called a {\em$k$-total limited packing} in $G$ provided that for all $v\in V(G)$, we have $|N(v) \cap L|\leq k$. The {\em$k$-total limited packing number}, denoted $L_{k,t}(G)$, is the largest number of vertices in a $k$-total limited packing set. We can consider total limited packing first as a generalization of open packing, second as a dual of tuple total domination and third as a total version of limited packing. In fact, one can apply total limited packing to the subjects that we consider for limited packing as applications.\\
Let $S \subseteq V$. For a real-valued function $f:V\rightarrow R$ we define $ f(S)=\sum_{v \in S }f(v)$. Also, $f(V)$ is the weight of $f$. A signed total dominating function, abbreviated STDF, of $G$ is defined in \cite{z} as a function $f:V\rightarrow \{-1,1\}$ such that $f(N(v))\geq1$ for every $v\in V$. The signed total domination number, abbreviated  STDN, of $G$ is
$\gamma_{st}(G)=min\ f(V)|f \ is\ a \ STDF\\ \ of \ G \}$. There exist some real-world applications of signed total domination. For example, the author in \cite{h} applied this concept to model networks of people or organizations in which global decisions must be made.\\
In this paper, we continue the study of the concept of the signed total domination in graphs. The authors noted that most of the existing bounds on $\gamma_{st}(G)$ are lower bounds. In Section 1, we prove that $‎\gamma‎_{st}(G)‎‎‎\leq n-2‎\lfloor‎\frac{2‎\rho‎_{o}(G)‎‎+‎\delta-3‎}{2}‎‎\rfloor‎‎‎$, for a graph $G$ of order $n$ with $‎\delta(G)‎\geq3‎‎$. Also, we show that $n-2(s-s')$ is an upper bound of the signet total domination number of a tree $T$ of order $n$ with $s$ support vertices and $s'$ support vertices of degree two. Furthermore, we characterize all trees attaining this bound. In Section 2, we give some lower bound on the signed total domination number of graphs. As an application of the well-known theorem of Tur\'{a}n \cite{t}  we give a lower bound on this parameter for $K‎_{r+1}‎$-free graphs and conclude the lower bounds given in \cite{sc} for $r$-partite graphs and in \cite{sc} for triangle-free graphs as special cases.

\section{Upper bounds}

First we apply the concept of total limited packing to obtain a sharp upper bound on $\gamma_{st}(G)$.
\begin{theorem}
Let $G$ be a graph of order $n$ and $\delta\geq3$. Then
\begin{equation*}
‎\gamma‎_{st}(G)‎‎‎\leq n-2‎\lfloor‎\frac{2‎\rho‎_{o}(G)‎‎+‎\delta-3‎}{2}‎‎\rfloor‎‎‎
\end{equation*}
and this bound is sharp.
\end{theorem}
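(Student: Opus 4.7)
The plan is to construct a signed total dominating function from a ``large'' total limited packing: I aim to produce a $k$-total limited packing of size $\rho_{o}(G)+k-1$ for $k=\lfloor(\delta-1)/2\rfloor$, then convert it into an STDF by assigning $-1$ on the packing and $+1$ elsewhere.

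First, I observe that whenever $k\le\lfloor(\delta-1)/2\rfloor$ and $L$ is a $k$-total limited packing, the function $f:V\to\{-1,1\}$ with $f(v)=-1$ exactly on $L$ satisfies
\[
f(N(v))=\deg(v)-2|N(v)\cap L|\ge \delta-2k\ge 1
\]
for every $v\in V$, so $f$ is an STDF of weight $n-2|L|$. In particular, $\gamma_{st}(G)\le n-2L_{k,t}(G)$, and the task reduces to lower-bounding $L_{k,t}(G)$.

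Next I enlarge a maximum open packing. Set $k=\lfloor(\delta-1)/2\rfloor\ge 1$ and let $B=\{v_{1},\dots,v_{\rho_{o}(G)}\}$ be a maximum open packing. Since $B$ is an open packing, $|N(v_{1})\cap B|\le 1$ for any fixed $v_{1}\in B$ (otherwise two members of $B$ would share the common neighbor $v_{1}$), so $|N(v_{1})\setminus B|\ge\delta-1\ge k-1$ and I can pick $S\subseteq N(v_{1})\setminus B$ of size $k-1$. I then claim $L=B\cup S$ is a $k$-total limited packing of size $\rho_{o}(G)+k-1$. Indeed, at $v_{1}$, $|N(v_{1})\cap L|\le 1+(k-1)=k$; at any other $v_{i}\in B$, the open-packing identity $N(v_{i})\cap N(v_{1})=\emptyset$ forces $S\cap N(v_{i})=\emptyset$, so $|N(v_{i})\cap L|=|N(v_{i})\cap B|\le 1$; and at a vertex outside $B$, $|N(v)\cap L|\le|N(v)\cap B|+|S|\le 1+(k-1)=k$. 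Plugging this into the STDF bound and splitting on the parity of $\delta$ rewrites $n-2(\rho_{o}(G)+k-1)$ as $n-2\lfloor(2\rho_{o}(G)+\delta-3)/2\rfloor$, which is the claimed inequality; sharpness should then be exhibited on a small concrete family (e.g., short cycles or suitable complete bipartite graphs) where $\rho_{o}$ and $\delta$ match this count.

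The main obstacle is the enlarging step: the construction must adjoin exactly $k-1$ fresh vertices to $B$ without spoiling the packing constraint at any vertex of $G$. This is where the open-packing property of $B$ is crucial, since it is exactly what prevents the added neighbors of $v_{1}$ from appearing in the neighborhood of any other $v_{i}\in B$. Without that structural fact one would have to work harder and would likely lose the linear-in-$\rho_{o}$ savings, settling instead for a weaker bound of the form $n-2\rho_{o}(G)$ obtained by using $B$ alone.
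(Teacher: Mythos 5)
Your argument for the inequality is correct, and its first half coincides with the paper's: both convert a $\lfloor(\delta-1)/2\rfloor$-total limited packing $L$ into the STDF that is $-1$ on $L$ and $+1$ elsewhere, reducing the problem to showing $L_{\lfloor(\delta-1)/2\rfloor,t}(G)\ge \rho_o(G)+\lfloor(\delta-1)/2\rfloor-1$. Where you diverge is in proving that inequality. The paper proceeds iteratively: it shows a maximum $j$-total limited packing is never all of $V$, that adjoining any outside vertex yields a $(j+1)$-total limited packing, hence $L_{j+1,t}(G)\ge L_{j,t}(G)+1$, and it telescopes this chain up from $L_{1,t}(G)=\rho_o(G)$. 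You instead build the required $k$-total limited packing in one shot, adjoining $k-1$ neighbours of a single vertex $v_1$ of a maximum open packing $B$; the open-packing property ($|N(w)\cap B|\le 1$ for every $w\in V$, and $N(v_i)\cap N(v_1)=\emptyset$ for $i\ne 1$) confines the effect of the added vertices to the neighbourhood of $v_1$, and your case check is sound (indeed your third case already covers all vertices). Your route is slightly more economical, avoiding the ``$L\ne V$'' claim at every level, while the paper's yields the reusable monotonicity fact $L_{j+1,t}\ge L_{j,t}+1$, a device it deploys again in its Theorem 3.3. Your closing arithmetic $n-2(\rho_o(G)+k-1)=n-2\lfloor(2\rho_o(G)+\delta-3)/2\rfloor$ is also correct.

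The one real deficiency is sharpness, which you defer rather than establish, and your suggested witnesses are off: cycles have $\delta=2$ and do not meet the hypothesis $\delta\ge 3$. The paper uses $K_{2n}$ for $n\ge 2$, where $\rho_o(K_{2n})=1$, $\delta=2n-1$, the bound evaluates to $2$, and $\gamma_{st}(K_{2n})=2$; a balanced complete bipartite graph $K_{m,m}$ with $m\ge 3$ also works, but some such verification must actually be carried out to finish the statement as claimed.
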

\begin{proof}
Let $L$ be a maximum $\lfloor\frac{\delta-1}{2}\rfloor$-total
limited packing set in $G$. Define $f:V\longrightarrow\{-1,1\}$, by
\begin{equation*}
f(v) = \left\{
\begin{array}{rl}
-1 & \text{if } v\in L\\
1 & \text{if } v\in V-L\\
\end{array} \right.
\end{equation*}
Since $L$ is a $\lfloor\frac{\delta-1}{2} \rfloor$-total limited
packing, $|N(v)\cap(V-L)|=deg(v)-|N(v)\cap L|\geq\delta-\lfloor
\frac{\delta-1}{2}\rfloor$. Therefore, for every vertex $v\in V$, we
have $f(N(v))=|N(v)\cap(V-L)|-|N(v)\cap L|\geq
\delta-\lfloor\frac{\delta-1}{2} \rfloor-\lfloor\frac{\delta-1}
{2}\rfloor\geq1$. Therefore $f$ is an STDF of $G$ with weight
$n-2|L|=n-2L_{\lfloor\frac{\delta-1}{2} \rfloor,t}(G)$. This shows that
\begin{equation}
\gamma_{st}(G)\leq n-2L_{\lfloor \frac{\delta-1}{2}\rfloor,t}(G).
\end{equation}
Now let $L$ be a maximun $\lfloor\frac{\delta-1}{2} \rfloor$-total limited
packing in $G$.ٌ We claim that $L\neq V$. If $L=V$ and $u\in V$
such that $deg(u)=\Delta$, then $\Delta=|N(u)\cap L|\leq
\lfloor\frac{\delta-1}{2} \rfloor$, a contradiction. Now let $u\in
V-L$. It is easy to check that $|N(v)\cap(L\cup\{u\})|\leq
\lfloor\frac{\delta-1}{2} \rfloor+1$ for each $v\in V$. Therefore
$L\cup\{u\}$ is a $(\lfloor \frac{\delta-1}{2}\rfloor+1)$-total
limited packing in $G$. Hence
$1+L_{\lfloor\frac{\delta-1}{2} \rfloor,t}(G)=|L\cup\{u\}|\leq L_{(\lfloor\frac{\delta-1}{2} \rfloor+1), t}(G)$.
If we continue this process we finally arrive at
$$L_{\lfloor\frac{\delta-1}{2} \rfloor,t}(G)\geq1+L_{(\lfloor\frac{\delta-1}{2} \rfloor-1),t}(G)
\geq \ldots \geq \left\lfloor\frac{\delta-1}{2}\right \rfloor-1+L_{1,t}(G),$$
and since $L_{1,t}(G)=\rho_{\circ}(G)$, we have
\begin{equation}
L_{\lfloor\frac{\delta-1}{2} \rfloor,t}(G)\geq
\left\lfloor\frac{\delta-1}{2}\right \rfloor-1+\rho_{\circ}(G).
\end{equation}
Inequalities $(1)$ and $(2)$ give the desired upper bound. Moreover,
this bound is sharp. It is sufficient to consider the complete graph
$K_{2n}$, when $n\geq2$. It is easy to check that
$\rho_{\circ}(K_{2n})=1$ and $\gamma_{st}(K_{2n})=2$.
\end{proof}

As the reader can check, the signed total domination number of a tree could be arbitrary large. Henning \cite{h} characterized all trees $T$ of order $n$ satisfying $\gamma_{st}(T)=n$ as all trees $T$ in which every vertex is a support vertex or is adjacent to a vertex of degree two. Now, we bound the signed total domination number of a tree from above by considering the number of its support vertices and characterize all trees attaining this bound. For this purpose, we define $‎\Omega‎$ to be the family of all trees $T$ satisfying:\vspace{2mm}\\
(a) For any support vertex $u$ with $|L‎_{u}‎|‎\geq2‎$, $deg(u)‎\leq4‎$;\\
(b)\ Every vertex in $V(T)‎\setminus(S(T)‎\cup L(T)‎)‎$ is adjacent to a support vertex or a vertex of degree two.
\begin{theorem}
Let $T$ be a tree of order $n$, with $s$ support vertices and $s'$ support vertices of degree two. Then
$$\gamma_{st}(T)‎\leq n-2(s-s')‎$$ with equality if and only if $T‎\in ‎‎‎\Omega‎‎$.
\end{theorem}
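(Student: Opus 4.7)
The plan is to prove the inequality by an explicit construction, and then derive the equality characterization via two strict-improvement arguments when $T\notin\Omega$ together with a structural lower-bound argument when $T\in\Omega$.

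For the upper bound, define $f:V(T)\to\{-1,1\}$ by choosing, for each support vertex $u$ with $\deg(u)\geq 3$, a single leaf $\ell_u\in L_u$ and setting $f(\ell_u)=-1$; set $f(v)=1$ everywhere else. The only vertices having any $-1$ neighbour are the selected supports $u$, where $f(N(u))=\deg(u)-2\geq 1$, while every other $v$ satisfies $f(N(v))=\deg(v)\geq 1$. Hence $f$ is an STDF of weight $n-2(s-s')$.

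For the direction $\gamma_{st}(T)=n-2(s-s')\Rightarrow T\in\Omega$, assume $T\notin\Omega$ and improve $f$ by $2$. If (a) fails at some support $u$ with $|L_u|\geq 2$ and $\deg(u)\geq 5$, flip a second leaf of $u$ to $-1$; the inequality at $u$ now reads $\deg(u)-4\geq 1$, the STDF condition is preserved everywhere else, and the weight drops by $2$. If (b) fails at some $w\in V(T)\setminus(S(T)\cup L(T))$, then every neighbour of $w$ is non-leaf (else $w\in S(T)$), non-support (by hypothesis) and of degree at least $3$ (again by hypothesis); flipping $f(w)$ to $-1$ gives $f(N(v))=\deg(v)-2\geq 1$ at each neighbour $v$ of $w$ while $f(N(w))$ is unchanged. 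Either modification strictly decreases the weight, contradicting equality.

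For the reverse direction, assume $T\in\Omega$ and let $f$ be an arbitrary STDF with $P=f^{-1}(-1)$; the goal is $|P|\leq s-s'$. Four observations organise the proof. (i) At each support $u$ its pendant leaf forces $f(u)=1$, so $S(T)\cap P=\emptyset$. (ii) At any degree-$2$ vertex $v$ the inequality $f(N(v))\geq 1$ forces both neighbours of $v$ to $+1$, so no vertex of $P$ can have a degree-$2$ neighbour. (iii) Combining (ii) with condition (b) for non-leaf vertices of $P$, and with the fact that a support adjacent to a $P$-leaf must have degree at least $3$ (else (ii) is violated at it), every $v\in P$ has at least one neighbour among the support vertices of degree $\geq 3$. (iv) For every support $u$ of degree $\geq 3$, $f(N(u))\geq 1$ gives $|N(u)\cap P|\leq\lfloor(\deg(u)-1)/2\rfloor$, and combined with (a) this forces $|L_u\cap P|\leq 1$ whenever $|L_u|\geq 2$. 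From (iii) build a charging $\phi$ from $P$ into the set of support vertices of degree $\geq 3$, and argue injectivity of $\phi$; the bound $|P|\leq s-s'$ follows.

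The hard step is the injectivity of $\phi$. A support $u$ of degree $\geq 3$ with $|L_u|=1$ and large degree can in principle host several non-leaf $P$-neighbours, and these must be rerouted to distinct support vertices of degree $\geq 3$ elsewhere in $T$. Condition (b), applied recursively along the branches of $T$ rooted at such a $u$, together with (ii), is what guarantees that alternative targets exist: any non-leaf $P$-vertex attached to $u$ via a chain of degree-$\geq 3$ internal vertices is eventually adjacent to a further support of degree $\geq 3$, onto which its charge can be shifted.
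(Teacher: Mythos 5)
Your construction for the upper bound and your argument that equality forces conditions (a) and (b) are correct and essentially coincide with the paper's: the same function $f$ (one leaf of each support vertex of degree at least $3$ set to $-1$), and the same two local modifications (flip a second leaf of a support $u$ with $|L_u|\geq 2$ and $\deg(u)\geq 5$; flip an internal vertex all of whose neighbours are non-support vertices of degree at least $3$).

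The gap is in the direction $T\in\Omega\Rightarrow\gamma_{st}(T)\geq n-2(s-s')$. Your observations (i)--(iv) are individually correct, but they only give that every $v\in P$ has a neighbour among the supports of degree at least $3$, and that such a support $u$ satisfies $|N(u)\cap P|\leq\lfloor(\deg(u)-1)/2\rfloor$. When $|L_u|=1$ condition (a) places no bound on $\deg(u)$, so a single such $u$ may absorb several $P$-neighbours, and everything hinges on rerouting all but one of them to \emph{distinct} other supports of degree at least $3$. You name the injectivity of $\phi$ as ``the hard step'' and then do not prove it: the sketch (``condition (b) applied recursively along branches \dots\ alternative targets exist \dots\ the charge can be shifted'') neither defines the reassignment nor verifies the Hall-type condition needed for a system of distinct representatives, and the claim is not obviously true as stated --- a non-leaf $P$-vertex adjacent to $u$ need not be adjacent to any support of degree at least $3$ other than $u$, so its charge cannot simply be shifted to an adjacent alternative target. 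As written, the lower bound for $T\in\Omega$ is unproven. The paper avoids this global charging altogether: it works with a minimum STDF that (after an exchange) assigns $-1$ to every leaf in $L'$ and shows that a single additional $-1$ vertex already violates $f(N(\cdot))\geq 1$ either at a support vertex (using (a)) or at a degree-two vertex (using (b)). Restructuring your argument along those local lines would close the gap far more cheaply than completing the charging.
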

\begin{proof}
Let $S'$ be the set of all support vertices of degree two. We choose just one of the leaves adjacent to $v$, for every support vertex $v‎\in S‎\setminus S'‎‎$ and consider $L'$ as the set of all those leaves. We define $f:V(T)‎\rightarrow\{-1,1\}‎$ by $f(v)=-1$ if $v‎\in L'‎$, and $f(v)=1$ if $v‎\in V(T)‎\setminus L'‎‎$. It is easy to see that $f$ is a STDF of $G$. Thus
$$\gamma_{st}(T)‎‎\leq f(V(T))=n-2|L'|=n-2(s-s').‎$$
Suppose now that $f:V(T)‎\rightarrow\{-1,1\}‎$ is a minimum STDF of $T$ with weight $n-2(s-s')‎$. Let $u$ be a support vertex with degree at least three. Since $f$ is a minimum STDF then there must be at least one vertex $v$ in $N(u)$ with weight $-1$ under $f$. Without loss of generality we assume that $v$ belongs to $L'$. We first show that $T$ satisfies (a). Suppose that there is a support vertex $u$ with $|L‎_{u}‎|‎\geq2‎$ in which $deg(u)‎\geq5‎$ and let $v‎\in L'‎\cap N(u)‎‎‎$. Since $f(N(v))‎\geq1‎$, then $f(u)=1$. On the other hand, since $f$ is a minimum STDF of $T$ and $deg(u)‎\geq5‎$ there exists a vertex $w$ in $N(u)$, different from $v$, with weight $-1$ under $f$. This is a contradiction, because of $f(V(T))‎\leq n-2(s-s')-1=\gamma_{st}(T)-1‎$.\\
We now show that $T$ satisfies (b). Let there exists a vertex $v‎\in V(T)‎\setminus(S(T)‎\cup L(T)‎)‎‎$, in which is not adjacent to support vertices and all of its neighbors are of degree at least three. In this case, $f':V(T)‎\rightarrow\{-1,1\}‎$ that assigns to $v$ the value $-1$ and to all other vertices $w$ the value $f(v)$ would be a STDF of $G$ with weight $f'(V(T))‎\leq f(V(T))-2‎$. This is a contradiction.\\
We now let $T‎\in ‎\Omega‎‎$ and $f:V(T)‎\rightarrow\{-1,1\}‎$ be a minimum STDF of $T$ in which $f(v)=-1$, for all $v‎\in L'‎$. Suppose to the contrary that $f(V(T))<n-2(s-s')$. Therefore, there exists a vertex $v‎\in V(T)‎\setminus(S(T)‎\cup L'‎)‎$‎ such that $f(v)=-1$. Let $v‎\in L(T)‎$, then $v‎\in N(u)‎$ for some $u‎\in S‎\setminus S'‎‎$. Therefore $|L‎_{u}‎|‎\geq2‎$ and (a) implies $f(N(u))‎\leq0‎$ that is a  contradiction. If $v‎\in V(T)‎\setminus(S(T)‎\cup L(T)‎)‎$ then $v$ is not adjacent to a support vertex, similarly. Therefore, $v$ is adjacent to a vertex $w$ of order two, by (b). Hence, $f(N(w))‎\leq0‎$ that is a contradiction. The above discussion implies $\gamma_{st}(T)=n-2(s-s')‎$.
\end{proof}
We conclude this section by establishing an upper bound on the signed total domination number of a connected cubic graph. Henning \cite{h} proved that for every cubic graph of order $n$, $\gamma_{st}(G)‎\leq5n/7‎$. We now show that if $G$ is a connected cubic graph different from the Heawood graph $G‎_{14}‎$, then $\gamma_{st}(G)‎\leq2n/3$.
\begin{center}
\includegraphics{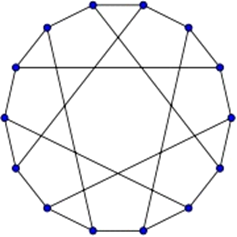}\\
Figure 1. {Heawood graph}
\end{center}

We need the following useful lemma.
\begin{lemma}\cite{hy}
If $G$ is a connected cubic graph of order $n$ different from the Heawood graph, then $‎\gamma‎_{‎\times‎2,t}(G)‎\leq 5n/6‎‎‎$, and this bound is sharp.
\end{lemma}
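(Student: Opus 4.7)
The plan is to translate the problem into one about open packings and then exploit maximality structurally. Observe that in a cubic graph $G$, a set $S\subseteq V(G)$ is a $2$-tuple total dominating set if and only if every vertex has at most one neighbor in $T:=V(G)\setminus S$, which is precisely the condition that $T$ be an open packing. Consequently $\gamma_{\times 2,t}(G)=n-\rho_o(G)$, and the lemma reduces to showing that $\rho_o(G)\geq n/6$ for every connected cubic graph different from the Heawood graph $G_{14}$.

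To attack the latter, I would fix a maximum open packing $T$ and partition $V(G)=T\sqcup A\sqcup B$ with $A=N(T)\setminus T$ and $B=V(G)\setminus N[T]$. The disjointness of the open neighborhoods of vertices in $T$ forces each vertex of $A$ to have exactly one neighbor in $T$ and the subgraph induced by $T$ to be a matching; writing $t=|T|$, $a=|A|$, $b=|B|$, and $e_T$ for the number of edges inside $T$, cubicness then yields $a=3t-2e_T$. Maximality of $T$ supplies two further constraints: every $u\in B$ has a neighbor in $A$ (otherwise $T\cup\{u\}$ would still be an open packing), and every $u\in A$ has a neighbor in $A$ as well, because the common neighbor with some $T$-vertex guaranteed by maximality cannot lie in $T$ without giving $u$ a second $T$-neighbor.

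Combining these via a degree-sum argument on $A$ yields the identity $2e(A,A)+e(A,B)=2a$, which together with $e(A,B)\geq b$ and $2e(A,A)\geq a$ gives $b\leq a$, hence $n=t+a+b\leq 7t-4e_T$, a preliminary estimate $\rho_o(G)\geq n/7$. To sharpen this to the required $\rho_o(G)\geq n/6$, I would exploit the maximality constraint on $A$ more finely: distinguishing the ``triangle'' configuration (two $A$-vertices sharing their $T$-neighbor together with $u$ and that common $T$-neighbor forming a triangle) from the ``non-triangle'' configuration (an $A$-neighbor of $u$ with a distinct $T$-neighbor) forces either additional matched pairs in $T$, increasing $e_T$, or additional edges inside $A$ which further reduce $e(A,B)$ and hence $b$; a careful accounting then upgrades $b\leq a$ to the stronger inequality $a+b\leq 5t$.

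The main obstacle will be this final tightening together with the identification of the Heawood graph as the unique exception. I expect one must show that the only cubic graph in which every forced local swap fails to enlarge $T$ is the bipartite, girth-$6$ incidence graph of the Fano plane, namely $G_{14}$; in that graph every two vertices of the same partite set share a common neighbor, collapsing $\rho_o(G_{14})$ to $2<14/6$ and explaining the exclusion. I would organize the proof as a minimum-counterexample argument combined with a girth-based case analysis of short cycles through $A$, and then confirm sharpness for graphs different from $G_{14}$ by exhibiting a small cubic graph attaining $\rho_o(G)=n/6$.
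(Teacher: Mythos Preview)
The paper does not prove this lemma at all: it is quoted verbatim from Henning and Yeo \cite{hy} and used as a black box to derive Theorem~2.4. So there is no ``paper's own proof'' to compare against here; the Henning--Yeo argument itself proceeds via strong transversals in the neighbourhood hypergraph, not through open packings.

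Your reformulation $\gamma_{\times 2,t}(G)=n-\rho_o(G)$ for cubic $G$ is correct and is a legitimate way to restate the target as $\rho_o(G)\ge n/6$. However, the preliminary step already has a gap: it is not true that every $u\in A$ must have a neighbour in $A$. Maximality of $T$ only says that some $v\in T$ shares a common neighbour $x$ with $u$; if $x\in T$ this does \emph{not} give $u$ a second $T$-neighbour, because $x$ can simply be $u$'s unique $T$-neighbour $w$, with $v$ the matched partner of $w$ inside $T$. Thus the correct dichotomy is ``$u$ has an $A$-neighbour \emph{or} $u$'s $T$-neighbour lies in a $T$-edge'', and the inequality $2e(A,A)\ge a$ (hence $b\le a$ and the $n/7$ bound) is not established by your argument. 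More importantly, the substantive content of the lemma---pushing $n/7$ up to $n/6$ and singling out the Heawood graph---is left as a plan (``local swaps'', ``girth-based case analysis'') with no indication of how the accounting actually closes; this is precisely the hard part of the Henning--Yeo paper, and a sketch at this level of detail does not constitute a proof.
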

\begin{theorem}
If $G$ is a connected cubic graph of order $n$ different from the Heawood graph, then $‎\gamma‎_{st}(G)‎\leq 2n/3$, and this bound is sharp.
\end{theorem}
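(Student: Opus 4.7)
The plan is to convert a minimum $2$-tuple total dominating set directly into a signed total dominating function, exploiting the fact that cubicness tightly couples the two concepts. Concretely, I would let $D$ be a minimum $2$-tuple total dominating set of $G$, so that $|D|=\gamma_{\times 2,t}(G)$, and define $f:V(G)\to\{-1,1\}$ by $f(v)=1$ if $v\in D$ and $f(v)=-1$ otherwise.

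The next step is to verify that $f$ is an STDF. For any $v\in V(G)$, since $\deg(v)=3$ and $|N(v)\cap D|\geq 2$, we have $|N(v)\setminus D|=3-|N(v)\cap D|\leq 1$, hence
\begin{equation*}
f(N(v))=|N(v)\cap D|-|N(v)\setminus D|\geq 2-1=1.
\end{equation*}
Therefore $f$ is an STDF, and its weight equals
\begin{equation*}
f(V(G))=|D|-(n-|D|)=2|D|-n.
\end{equation*}

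Applying the preceding lemma, which gives $|D|=\gamma_{\times 2,t}(G)\leq 5n/6$ under the hypothesis that $G$ is connected cubic and not the Heawood graph, yields
\begin{equation*}
\gamma_{st}(G)\leq f(V(G))=2|D|-n\leq 2\cdot\frac{5n}{6}-n=\frac{2n}{3},
\end{equation*}
as required. The computation is essentially forced once one sees that in a cubic graph, a lower bound on $|N(v)\cap D|$ translates one-for-one into a lower bound on $f(N(v))$; this is the only place the cubic hypothesis enters in an essential way.

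For sharpness, I would point to a cubic graph attaining equality in the lemma (such an example is guaranteed by its sharpness claim) and note that on such a graph the above two inequalities become equalities simultaneously. The only nontrivial issue is to exhibit a concrete extremal family and check that the STDF produced from its minimum $2$-tuple total dominating set is in fact optimal (rather than only upper-bounded by $2n/3$); this is the main obstacle, since it requires a matching lower bound on $\gamma_{st}(G)$ for that family, but it should follow from a short counting argument analogous to the one yielding the $5n/6$ extremal graphs for $\gamma_{\times 2,t}$.
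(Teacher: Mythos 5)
Your derivation of the upper bound is correct and is essentially the second half of the paper's own proof: the paper likewise takes a minimum double total dominating set $D$, assigns $+1$ on $D$ and $-1$ off $D$, checks $f(N(v))\geq 1$, and concludes $\gamma_{st}(G)\leq 2\gamma_{\times 2,t}(G)-n\leq 2n/3$ via the lemma. The one piece you leave open --- the matching lower bound on $\gamma_{st}$ needed for sharpness --- is exactly what the paper supplies with a converse inequality: for a minimum STDF $f$ of a cubic graph, $f(N(v))\geq 1$ forces $|N(v)\cap V_{+}|\geq 2$ for every $v$, so $V_{+}$ is itself a double total dominating set and $(n+\gamma_{st}(G))/2=|V_{+}|\geq\gamma_{\times 2,t}(G)$. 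Combining the two directions gives the identity $\gamma_{st}(G)=2\gamma_{\times 2,t}(G)-n$ for every connected cubic graph, so any graph attaining $\gamma_{\times 2,t}(G)=5n/6$ in the lemma automatically attains $\gamma_{st}(G)=2n/3$; no separate counting argument for an extremal family is needed. Adding that short converse step would complete your proof.
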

\begin{proof}
Let $f:V‎\rightarrow\{-1,1\}‎$ be minimum STDF of $G$. Since, $f(N(v))‎\geq1‎$, it follows that $|N(v)‎\cap V‎_{+}‎‎|‎‎\geq‎2‎‎‎$ for every vertex $v‎\in V(G)‎$. Hence, $V‎^{+}‎$ is a double total dominating set in $G$. Therefore,
\begin{equation}
(n+\gamma_{st}(G))/2=|V‎^{+}|‎\geq ‎‎\gamma‎_{‎\times‎2,t}(G)‎.
\end{equation}
Now let $D$ be a minimum double total dominating set in $G$. We define $f:V\longrightarrow\{-1,1\}$, by $f(v)=1$, if $v‎\in D‎$ and $f(v)=-1$, if $v‎\in V‎\setminus D‎‎$. Then, $f(N(v))=|N(v)‎\cap D‎|-|N(v)‎\cap (V‎\setminus D‎)‎|‎\geq1‎$. Therefore $f$ is a STDF of $G$. Thus,
\begin{equation}
\gamma_{st}(G)‎\leq ‎f(V(G))=2|D|-n=2\gamma‎_{‎\times‎2,t}(G)-n.
\end{equation}
Together inequalities (3) and (4) imply, $\gamma_{st}(G)=2\gamma‎_{‎\times‎2,t}(G)-n$. Now Lemma 2.2 implies the desired upper bound.
\end{proof}
\section{Lower bounds}
At this point we are going to present some sharp lower bounds on
$\gamma_{st}(G)$. First, let us introduce some notations. Let
$f:V‎\rightarrow‎\{-1,1\}$ be a minimum STDF of $G$. We define
$V_{+}=\{v\in V|f(v)=1\}$, $V_{-}=\{v\in V|f(v)=-1\}$,
$G_{+}=G[V_{+}]$ and $G_{-}=G[V_{-}]$ where $G_{+}$ and $G_{-}$ are
the subgraph of $G$ induced by $V_{+}$ and $V_{-}$, respectively.
For convenience, let $E_{+}=|E(G_{+})|$, $E_{-}=|E(G_{-})|$ and we define $V_{\circ}$ and $V_{e}$ as
the set of vertices with odd degree and the set of vertices with
even degree, respectively. Also $V_{+}^{o}=V_{+}\cap V_{o}$,
$V_{+}^{e}=V_{+}\cap V_{e}$, $V_{-}^{o}=V_{-}\cap V_{o}$ and
$V_{-}^{e}=V_{-}\cap V_{e}$. Finally, $deg_{G_{+}}(v)=|N(v)\cap
V_{+}|$ and
$deg_{G_{-}}(v)=|N(v)\cap V_{-}|$.\\\\
We begin with the following useful lemma.
\begin{lemma}
Considering the above notations, the following statements hold.\\\\
$(a)$\ \ \ $(\lfloor\frac{\delta}{2}
\rfloor+1)|V_{-}|\leq|[V_{+},V_{-}]|\leq(\lceil\frac{\Delta}{2}
\rceil-1)|V_{+}|$,\\
$(b)$\ \ \
$n+|V_{-}|+4|E_{-}|+|V_{e}|\leq2|E_{+}|+|[V_{+},V_{-}]|$.
\end{lemma}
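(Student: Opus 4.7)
\medskip

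\noindent\textbf{Proof plan.} The common ingredient for both parts is the basic identity $f(N(v))=|N(v)\cap V_{+}|-|N(v)\cap V_{-}|$ together with the equality $|N(v)\cap V_{+}|+|N(v)\cap V_{-}|=\deg(v)$.

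For part $(a)$, I would argue vertex by vertex. Fix $v\in V_{-}$; from $f(N(v))\geq 1$ and $|N(v)\cap V_{+}|+|N(v)\cap V_{-}|=\deg(v)\geq\delta$, one gets $2|N(v)\cap V_{+}|\geq\deg(v)+1\geq\delta+1$, whence $|N(v)\cap V_{+}|\geq\lceil(\delta+1)/2\rceil=\lfloor\delta/2\rfloor+1$. Summing this inequality over $v\in V_{-}$ and using $\sum_{v\in V_{-}}|N(v)\cap V_{+}|=|[V_{+},V_{-}]|$ yields the left inequality. The right inequality is symmetric: for $v\in V_{+}$, $f(N(v))\geq 1$ forces $|N(v)\cap V_{-}|\leq\lfloor(\deg(v)-1)/2\rfloor\leq\lceil\Delta/2\rceil-1$, and summing over $v\in V_{+}$ gives $|[V_{+},V_{-}]|\leq(\lceil\Delta/2\rceil-1)|V_{+}|$.

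For part $(b)$, the essential trick is a parity refinement: since $f(N(v))\equiv\deg(v)\pmod 2$, whenever $v\in V_{e}$ the condition $f(N(v))\geq 1$ actually strengthens to $f(N(v))\geq 2$. I would apply this separately to the two parts of $V$. Summing $f(N(v))=|N(v)\cap V_{+}|-|N(v)\cap V_{-}|$ over $v\in V_{-}$ produces
\begin{equation*}
|[V_{+},V_{-}]|-2|E_{-}|=\sum_{v\in V_{-}}f(N(v))\geq |V_{-}^{o}|+2|V_{-}^{e}|=|V_{-}|+|V_{-}^{e}|,
\end{equation*}
and summing over $v\in V_{+}$ gives in the same way
\begin{equation*}
2|E_{+}|-|[V_{+},V_{-}]|\geq |V_{+}|+|V_{+}^{e}|.
\end{equation*}

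The concluding step is a careful combination: doubling the first of these inequalities and adding the second, the terms $2|[V_{+},V_{-}]|$ on the left and $|[V_{+},V_{-}]|$ on the right leave
\begin{equation*}
|[V_{+},V_{-}]|+2|E_{+}|\geq |V_{+}|+|V_{-}|+|V_{-}|+|V_{+}^{e}|+2|V_{-}^{e}|+4|E_{-}|.
\end{equation*}
Since $|V_{+}|+|V_{-}|=n$ and $|V_{+}^{e}|+|V_{-}^{e}|=|V_{e}|$, the right-hand side equals $n+|V_{-}|+|V_{e}|+|V_{-}^{e}|+4|E_{-}|$, which is at least $n+|V_{-}|+4|E_{-}|+|V_{e}|$; this is the desired inequality. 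The one place where a mistake is easy is choosing the right linear combination of the two split inequalities; the asymmetric weighting (two times the $V_{-}$ bound plus one times the $V_{+}$ bound) is forced by the need to produce a $4|E_{-}|$ term and to eliminate $|[V_{+},V_{-}]|$ from the right-hand side while leaving it with coefficient $+1$ on the left.
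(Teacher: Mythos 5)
Your proposal is correct and follows essentially the same route as the paper: part $(a)$ is the same vertex-by-vertex degree count, and in part $(b)$ your two split inequalities $|[V_{+},V_{-}]|\ge |V_{-}|+2|E_{-}|+|V_{-}^{e}|$ and $2|E_{+}|\ge |[V_{+},V_{-}]|+|V_{+}|+|V_{+}^{e}|$ are exactly the paper's intermediate estimates (derived there by bounding $deg_{G_{-}}(v)$ and $deg_{G_{+}}(v)$ rather than via the parity of $f(N(v))$, which is an equivalent reformulation), and the concluding combination --- twice the first plus once the second --- is the same linear combination the paper uses.
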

\begin{proof}
(a) Let $f$ be a minimum STDF of $G$. Let $v\in V_{-}$. Since $f(N(v))\geq1$, we have
$deg_{G_{+}}(v)\geq \lfloor \frac{deg(v)}{2}\rfloor+1\geq \lfloor\frac{\delta}{2} \rfloor+1$.
Therefore $|[V_{+},V_{-}]|\geq(\lfloor\frac{\delta}{2}\rfloor+1)|V_{-}|$. Now let $v\in V_{+}$.
Then $deg_{G_{-}}(v)\leq \lceil\frac{deg(v)}{2} \rceil-1\leq \lceil\frac{\Delta}{2}\rceil-1$. Therefore
$|[V_{+},V_{-}]|\leq(\lceil\frac{\Delta}{2} \rceil-1)|V_{+}|$.

$(b)$ Let $f$ be a minimum STDF of $G$. First we derive a lower bound on $|[V_{+},V_{-}]|$.
Let $v\in V_{-}$. Since $f(N(v))\geq1$, we observe that $deg_{G_-}(v)\le |[v,V_+]|-1$ and
$deg_{G_-}(v)\le |[v,V_+]|-2$ when $deg(v)$ is even. This leads to
\begin{eqnarray*}
2|E_-| & = & \sum_{v\in V_-}deg_{G_-}(v) =\sum_{v\in V_-^e}deg_{G_-}(v)+ \sum_{v\in V_-^o}deg_{G_-}(v)\\
& \le & \sum_{v\in V_-^e}(|[v,V_+]|-2)+ \sum_{v\in V_-^o}(|[v,V_+]|-1)=|[V_-,V_+]|-|V_-|-|V_-^e|.
\end{eqnarray*}
This implies
\begin{equation}
|[V_{+},V_{-}]|\geq |V_{-}|+2|E_{-}|+|V_{-}^{e}|.
\end{equation}
Now let $v\in V_{+}$. Since $f(N(v))\geq1$, we observe that $deg_{G_+}(v)\ge |[v,V_-]|+1$ and
$deg_{G_+}(v)\ge |[v,V_-]|+2$ when $deg(v)$ is even. It follows that
\begin{eqnarray*}
2|E_+| & = & \sum_{v\in V_+}deg_{G_+}(v) =\sum_{v\in V_+^e}deg_{G_+}(v)+ \sum_{v\in V_+^o}deg_{G_+}(v)\\
& \ge & \sum_{v\in V_+^e}(|[v,V_-]|+2)+ \sum_{v\in V_+^o}(|[v,V_-]|+1)=|[V_+,V_-]|+|V_+|+|V_+^e|
\end{eqnarray*}
and so
\begin{equation}
|[V_{+},V_{-}]|\le 2|E_{+}|-|V_{+}^{e}|-|V_+|.
\end{equation}
Combining (5) and (6), we obtain $2|E_+|\ge n+2|E_-|+|V_e|$. Using this inequality and (5), we deduce that
$$2|E_+|+|[V_-,V_+]|\ge 2|E_-|+n+|V_e|+|V_-|+2|E_-|+|V_-^e|,$$
and this yields (b) immediately.
\end{proof}
We are now in a position to present the following lower bounds.
\begin{theorem}
Let $G$ be a graph of order $n$ and size $m$. Then\\\\
$(i)$  $\gamma_{st}(G)\geq \frac{(\lfloor
\frac{\delta}{2}\rfloor-\lceil \frac{\Delta}
{2}\rceil+2)n}{\lfloor \frac{\delta}{2}\rfloor+\lceil\frac{\Delta}{2} \rceil}$,\\
$(ii)$  $\gamma_{st}(G)\geq \frac{(5-3\Delta
-2\lceil\frac{\Delta}{2}\rceil)n+2n_{e}+8m}
{3\Delta+2\lceil\frac{\Delta}{2}\rceil-1}$,\\
$(iii)$  $\gamma_{st}(G)\geq
\frac{(5+3\delta-2\lceil\frac{\Delta}{2}\rceil)n+2n_{e}-4m}
{3\delta+2\lceil\frac{\Delta}{2}\rceil-1}$,\\
and these bounds are sharp, where $n_{e}=|V_{e}|$.
\end{theorem}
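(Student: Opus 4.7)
The plan is to deduce all three bounds from Lemma 3.1 combined with the handshake identities $\sum_{v\in V_+}\deg(v)=2|E_+|+|[V_+,V_-]|$ and $\sum_{v\in V_-}\deg(v)=2|E_-|+|[V_+,V_-]|$, together with the basic relations $|V_+|+|V_-|=n$, $\gamma_{st}(G)=|V_+|-|V_-|$, and $m=|E_+|+|E_-|+|[V_+,V_-]|$. In each part, the goal is to eliminate $|E_+|$, $|E_-|$ and $|[V_+,V_-]|$ in favor of $n$, $m$, $n_e$ and $\gamma_{st}(G)$.

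For (i), I would use Lemma 3.1(a) alone. Chaining its two inequalities gives $(\lfloor\delta/2\rfloor+1)|V_-|\le(\lceil\Delta/2\rceil-1)|V_+|$. Substituting $|V_+|=n-|V_-|$ and solving for $|V_-|$ yields an upper bound on $|V_-|$; plugging this into $\gamma_{st}(G)=n-2|V_-|$ produces (i) in one line.

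For (iii), the plan is to rewrite Lemma 3.1(b) using $2m=2|E_+|+2|E_-|+2|[V_+,V_-]|$ to obtain $|[V_+,V_-]|+6|E_-|\le 2m-n-|V_-|-n_e$. The handshake bound $2|E_-|+|[V_+,V_-]|\ge\delta|V_-|$ gives $6|E_-|\ge 3\delta|V_-|-3|[V_+,V_-]|$; inserting this collapses the left side to $3\delta|V_-|-2|[V_+,V_-]|$. Finally I apply the upper bound from Lemma 3.1(a), namely $|[V_+,V_-]|\le(\lceil\Delta/2\rceil-1)|V_+|$, substitute $|V_+|=n-|V_-|$, solve for $|V_-|$ and translate back through $\gamma_{st}(G)=n-2|V_-|$; the coefficients line up with (iii).

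For (ii), which is the dual construction and the conceptually trickiest of the three, my plan is to combine three times the degree bound $2|E_+|+|[V_+,V_-]|\le\Delta|V_+|$ with twice the upper bound from Lemma 3.1(a) to obtain $(3\Delta+2\lceil\Delta/2\rceil-2)|V_+|\ge 6|E_+|+5|[V_+,V_-]|$. The key observation is that the right-hand side rearranges as $4|E_+|+4|[V_+,V_-]|+(2|E_+|+|[V_+,V_-]|)$; applying Lemma 3.1(b) to the last parenthesis replaces it by $n+|V_-|+4|E_-|+n_e$. The combined right-hand side then becomes $4(|E_+|+|E_-|+|[V_+,V_-]|)+|V_+|+2|V_-|+n_e=4m+|V_+|+2|V_-|+n_e$, so that after moving $|V_+|$ to the left I get $(3\Delta+2\lceil\Delta/2\rceil-3)|V_+|-2|V_-|\ge 4m+n_e$. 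Expressing $|V_\pm|=(n\pm\gamma_{st}(G))/2$ and simplifying yields (ii). The main obstacle, if any, is the bookkeeping in this last step: recognizing the correct multipliers ($3$ and $2$) so that the cross-terms in $|E_+|$ and $|[V_+,V_-]|$ collapse cleanly through (b) into a pure multiple of $m$. Sharpness in all three cases should then follow by pinpointing small graphs (e.g.\ complete or regular bipartite-like examples) for which equality holds simultaneously in Lemma 3.1(a), Lemma 3.1(b), and the invoked degree bounds.
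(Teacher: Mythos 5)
Your proposal is correct and follows essentially the same route as the paper: part (i) by chaining the two inequalities of Lemma 3.1(a), and parts (ii) and (iii) by combining Lemma 3.1(b) with the handshake bounds $2|E_+|+|[V_+,V_-]|\le\Delta|V_+|$ and $2|E_-|+|[V_+,V_-]|\ge\delta|V_-|$ and the upper bound of Lemma 3.1(a), using exactly the same multipliers (you merely assemble the chain starting from the $|V_+|$ side, whereas the paper starts from $4m$ resp.\ $2m$; the inequalities are identical). The only loose end is sharpness, which you leave to unspecified ``complete or regular bipartite-like'' examples: complete graphs do \emph{not} attain (i) in general, and the example that works for all three bounds simultaneously is the cycle $C_n$, for which $\gamma_{st}(C_n)=n$ and each right-hand side also equals $n$.
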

\begin{proof}
$(i)$ Using  $(a)$ of the previous lemma and  $|V_{+}|=\frac{n+\gamma_{st}(G)}{2}$ and
$|V_{-}|=\frac{n-\gamma_{st}(G)}{2}$, the desired bound is easy to verify.

$(ii)$ Using $4m=4|E_{-}|+4|[V_{+},V_{-}]|+4|E_{+}|$ and Lemma 3.1 (b), we deduce that
$$n+|V_{-}|+|V_{e}|+4m\leq 6|E_{+}|+5|[V_{+},V_{-}]|.$$
Applying
$$2|E_+| = \sum_{v\in V_+}deg_{G_+}(v) =\sum_{v\in V_+}(deg(v)-|[v,V_-]|)\le\Delta|V_+|-|[V_+,V_-]|,$$
we obtain, $n+|V_{-}|+|V_{e}|+4m\leq 3\Delta|V_{+}|+2|[V_{+},V_{-}]|$. By Lemma 3.1 (a),
we have $2|[V_{+},V_{-}]|\leq 2(\lceil \frac{\Delta}{2}\rceil-1)|V_{+}|$ and therefore
$$n+|V_{-}|+|V_{e}|+4m\leq (3\Delta+2\left\lceil\frac{\Delta}{2}\right \rceil-2)|V_{+}|.$$
Because of $|V_{+}|=\frac{n+\gamma_{st}(G)}{2}$ and $|V_{-}|=\frac{n-\gamma_{st}(G)}{2}$, we obtain
the desired bound.\\
$(iii)$ Using $2m=2|E_{-}|+2|[V_{+},V_{-}]|+2|E_{+}|$ and Lemma 3.1 (b), we conclude that
$$n+|V_{-}|+6|E_{-}|+|V_{e}|+|[V_{+},V_{-}]|\leq2m.$$
Applying
$$2|E_-| = \sum_{v\in V_+}deg_{G_-}(v) =\sum_{v\in V_-}(deg(v)-|[v,V_+]|)\ge\delta|V_-|-|[V_-,V_+]|,$$
we see that $n+(1+3\delta) |V_{-}|+|V_{e}|-2|[V_{+},V_{-}]|\leq2m$. By Lemma 3.1 (a), we
have $2|[V_{+},V_{-}]|\leq 2(\lceil \frac{\Delta}{2}\rceil-1)|V_{+}|$ and thus
$$n+(1+3\delta)|V_{-}|+|V_{e}|-2(\left\lceil\frac{\Delta}{2}\right\rceil-1)|V_{+}|\leq2m.$$
This implies the last bound.\\
Since $\gamma_{st}(C_{n})=n$1, the cycle $C_n$ attains all the three lower bounds.
\end{proof}
We note that that the bound (i) in Theorem 3.2 can be found in the paper \cite{h} by Henning. However, our proof
is much shorter and transparent. If $G$ is an $r$-regular graph of order $n$, then (i) leads to
$\gamma_{st}(G)\geq2n/r$ when $r$ is even and $\gamma_{st}(G)\geq n/r$ when $r$ is odd. This is
a result by Zelinka \cite{z}.

\begin{theorem}
For every graph $G$ of order $n$, minimum degree $‎\delta‎$ and maximum degree $‎\Delta‎$,
$$‎\gamma‎_{st}(G)‎\geq-n+2max\{‎\lceil‎‎\frac{‎\Delta‎+3}{2}‎\rceil‎‎,‎\lceil‎‎\frac{2‎\gamma‎_{t}(G)+‎\delta-1‎‎‎}{2}‎\rceil‎‎\}‎‎‎$$
and this bound is sharp.
\end{theorem}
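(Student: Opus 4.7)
The plan is to work with a minimum STDF $f$ and its positive-support set $V_+=\{v:f(v)=1\}$, and use the identity $\gamma_{st}(G)=2|V_+|-n$ to translate the claimed inequality into the pair of lower bounds
\[
|V_+|\ \ge\ \left\lceil\tfrac{\Delta+3}{2}\right\rceil\qquad\text{and}\qquad |V_+|\ \ge\ \left\lceil\tfrac{2\gamma_t(G)+\delta-1}{2}\right\rceil,
\]
which I would establish separately and then combine by taking the larger.

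The second inequality is the easier of the two. The key observation is that $V_+$ is much more than a total dominating set: $f(N(v))\ge 1$ at every $v$ forces $|N(v)\cap V_+|\ge\lceil(\deg(v)+1)/2\rceil\ge\lceil(\delta+1)/2\rceil$, so $V_+$ is a $\lceil(\delta+1)/2\rceil$-tuple total dominating set. I would then prove the auxiliary chain $\gamma_{\times k,t}(G)\ge\gamma_{\times(k-1),t}(G)+1$ for every $k\ge 2$ by the easy remark that deleting a single vertex from a minimum $k$-tuple total dominating set still leaves a $(k-1)$-tuple total dominating set. Iterating down to $k=1$ gives $\gamma_{\times k,t}(G)\ge\gamma_t(G)+k-1$; substituting $k=\lceil(\delta+1)/2\rceil$ and using $\lceil(\delta+1)/2\rceil-1=\lceil(\delta-1)/2\rceil$ yields $|V_+|\ge\gamma_t(G)+\lceil(\delta-1)/2\rceil=\lceil(2\gamma_t(G)+\delta-1)/2\rceil$, as desired.

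For the first inequality I would pick a vertex $v$ of maximum degree and exploit the parity of $f(N(v))$: since $f(N(v))\equiv\Delta\pmod 2$ and $f(N(v))\ge 1$, we obtain $|N(v)\cap V_+|\ge\lceil(\Delta+1)/2\rceil$, with the improvement to $\Delta/2+1$ when $\Delta$ is even. If $v\in V_+$, then $\{v\}\cup(N(v)\cap V_+)\subseteq V_+$ immediately gives $|V_+|\ge 1+\lceil(\Delta+1)/2\rceil=\lceil(\Delta+3)/2\rceil$ and that case is finished. The main obstacle is $v\in V_-$: here the plan is to fix some $u\in N(v)\cap V_+$, note that $v\in N(u)\cap V_-$ together with $f(N(u))\ge 1$ forces $|N(u)\cap V_+|\ge 2$, and then produce a member of $N(u)\cap V_+$ lying outside $N(v)$, which supplies the extra $V_+$-vertex needed to reach $\lceil(\Delta+3)/2\rceil$. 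Should the pathological configuration $V_+\subseteq N(v)$ arise, I would rule it out by double-counting $|[V_+,V_-]|$ via Lemma 3.1, together with the observation that each $w\in N(v)\cap V_-$ is itself forced (by the same parity argument applied at $w$) to have at least two neighbors in $V_+$, while the degree of each vertex of $V_+$ into $V_-$ is controlled by $|N(u)\cap V_-|\le|N(u)\cap V_+|-1$. Sharpness would then be verified on the star $K_{1,n-1}$, where the first bound is attained.
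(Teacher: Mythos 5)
Your treatment of the second term in the maximum is correct and is exactly the paper's argument: $V_+$ is a $\lceil(\delta+1)/2\rceil$-tuple total dominating set, and the descending chain $\gamma_{\times k,t}(G)\ge\gamma_{\times(k-1),t}(G)+1$ brings this down to $\gamma_t(G)+\lceil(\delta-1)/2\rceil$. The trouble is entirely in the first term, where your route differs from the paper's (the paper regards $V_-$ as a $\lfloor(\Delta-1)/2\rfloor$-total limited packing and runs an analogous chain up to $L_{\Delta+1,t}(G)=n$). You correctly reduce to the single problematic case where $v\in V_-$ has degree $\Delta$ and $V_+\subseteq N(v)$, but the proposed double-count cannot dispose of it: on one side $|[V_+,V_-]|\le\sum_{u\in V_+}\bigl(|N(u)\cap V_+|-1\bigr)\le|V_+|\,(|V_+|-2)$, and on the other side your lower bound is at most linear in $n$ and $\Delta$, which for large $\Delta$ is perfectly consistent with $|V_+|=\lceil(\Delta+1)/2\rceil$ and produces no contradiction.

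The deeper problem is that this configuration is realizable, so the inequality you are trying to prove is false. Take $K_5$ on $\{a_1,\dots,a_5\}$, add vertices $x_1,\dots,x_4$ with $x_1\sim a_1,a_2$, $x_2\sim a_3,a_4$, $x_3\sim a_5,a_1$, $x_4\sim a_2,a_3$, and add a universal vertex $v$. Then $n=10$ and $\Delta=\deg(v)=9$, and the function $f$ with $f(a_i)=1$ and $f(v)=f(x_j)=-1$ satisfies $f(N(w))\ge1$ everywhere ($f(N(v))=5-4=1$, $f(N(a_i))=4-1-|N(a_i)\cap\{x_1,\dots,x_4\}|\ge1$, $f(N(x_j))=2-1=1$), so $\gamma_{st}(G)\le 0$, whereas the claimed bound is $-10+2\lceil 12/2\rceil=2$. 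Hence your Claim~1 --- and with it the theorem as stated --- is false; only $\gamma_{st}(G)\ge-n+2\lceil(\Delta+1)/2\rceil$ survives, which is precisely what your argument does prove in the bad case and what the example attains. For what it is worth, the paper's own proof commits the matching off-by-one error: the step $L_{k,t}(G)\le L_{k+1,t}(G)-1$ requires a maximum $k$-total limited packing to be a proper subset of $V$, which fails for $k=\Delta$ because $L_{\Delta,t}(G)=L_{\Delta+1,t}(G)=n$, so its chain likewise only yields $L_{\lfloor(\Delta-1)/2\rfloor,t}(G)\le n-\lceil(\Delta+1)/2\rceil$.
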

\begin{proof}
We first prove the following claims.\\
{\bf Claim 1.} $‎\gamma‎_{st}(G)‎\geq-n+2‎\lceil‎‎\frac{‎\Delta‎+3}{2}‎\rceil‎$.\\
Let $f:V‎\rightarrow\{-1,1\}‎$ be minimum STDF of $G$. Since, $f(N(v))‎\geq1‎$, it follows that $|N(v)‎\cap V‎_{-}‎‎|‎\leq ‎\lfloor‎\frac{‎\Delta‎-1}{2}‎‎\rfloor‎‎‎$ for every vertex $v‎\in V(G)‎$. Therefore $V‎_{-}‎$ is a $‎\lfloor‎\frac{‎\Delta‎-1}{2}‎‎\rfloor‎‎‎$-total limited packing in $G$. Thus
\begin{equation}
(n-‎\gamma‎_{st}(G)‎)/2=|V‎_{-}‎|‎\leq L‎_{‎\lfloor‎\frac{‎\Delta‎-1}{2}‎‎\rfloor‎‎‎,t}(G)‎‎.
\end{equation}
On the other hand, similar to the proof of Theorem 2.1 and taking into account that $L‎_{‎\Delta‎+1,t}(G)=n$, we obtain
\begin{equation*}\label{EQ6}
\begin{array}{lcl}
L‎_{‎\lfloor‎\frac{‎\Delta‎-1}{2}‎‎\rfloor‎‎‎,t}(G)‎&\leq & ‎L‎_{‎\lfloor‎\frac{‎\Delta‎-1}{2}‎‎\rfloor+1‎‎‎,t}(G)-1‎\leq...\\
‎&\leq & L‎_{‎\Delta‎+1,t}(G)‎‎‎-‎\Delta+‎\lfloor‎\frac{‎\Delta‎-3}{2}‎‎\rfloor=n-‎\lceil‎\frac{‎\Delta‎+3}{2}‎‎\rceil‎‎‎‎‎.
\end{array}
\end{equation*}
Now inequality (5) implies $‎\gamma‎_{st}(G)‎\geq-n+2‎\lceil‎‎\frac{‎\Delta‎+3}{2}‎\rceil‎$, as desired.\\
{\bf Claim 2.} $‎\gamma‎_{st}(G)‎\geq-n+2‎\lceil‎\frac{2‎\gamma‎_{t}(G)+‎\delta-1‎‎‎}{2}‎‎\rceil‎‎$.\\
Since, $f(N(v))‎\geq1‎$, it follows that $|N(v)‎\cap V‎‎_{+}‎‎|‎\geq ‎\lceil‎\frac{‎\delta‎+1}{2}‎‎\rceil‎‎‎$ for every vertex $v‎\in V(G)‎$. Therefore $V‎_{+}‎$ is a $\lceil‎\frac{‎\delta‎+1}{2}‎‎\rceil‎‎‎$-tuple total dominating set in $G$. Thus
\begin{equation}
(n+‎\gamma‎_{st}(G)‎)/2=|V‎_{+}‎|‎‎\geq ‎\gamma‎_{‎\times‎\lceil‎\frac{‎\delta‎+1}{2}‎‎\rceil‎‎‎,t}(G)‎‎‎.
\end{equation}
Now let $D$ be a minimum $‎\lceil‎\frac{‎\delta‎+1}{2}‎‎\rceil$-tuple total dominating set in $G$. Then $|N(v)‎\cap D‎|‎\geq \lceil‎\frac{‎\delta‎+1}{2}‎‎\rceil‎$, for every vertex $v\in V(G)$. Let $u\in D$. It is easy to see that $|N(v)‎\cap (‎D‎\setminus\{u\})‎‎‎|‎\geq ‎\lceil‎\frac{‎\delta‎+1}{2}‎‎\rceil‎-1$, for all $v\in V(G)$. Hence, $‎D‎\setminus\{u\}$ is a $‎(\lceil‎\frac{‎\delta‎+1}{2}‎‎\rceil-1)$-tuple total dominating set in $G$. Therefore, $‎\gamma‎_{‎\times‎‎\lceil‎\frac{‎\delta‎+1}{2}‎‎\rceil‎‎,t}(G)‎‎‎\geq ‎\gamma‎_{‎\times(‎‎\lceil‎\frac{‎\delta‎+1}{2}‎‎\rceil-1)‎‎,t}(G)+1‎$. By repeating this process, we obtain
\begin{equation*}\label{EQ6}
\begin{array}{lcl}
\gamma‎_{‎\times‎‎\lceil‎\frac{‎\delta‎+1}{2}‎‎\rceil‎‎,t}(G)&‎‎‎\geq & ‎\gamma‎_{‎\times(‎‎\lceil‎\frac{‎\delta‎+1}{2}‎‎\rceil-1)‎‎,t}(G)+1‎\geq...‎\\
&\geq & ‎\gamma‎_{‎\times‎1,t}(G)+‎\lceil‎\frac{‎\delta‎+1}{2}‎‎\rceil-1=‎\gamma‎_{t}(G)‎‎‎‎‎‎‎‎+\lceil‎\frac{‎\delta‎-1}{2}‎‎\rceil.
\end{array}
\end{equation*}
By inequality (6),
$$(n+‎\gamma‎_{st}(G)‎)/2‎\geq \gamma‎_{t}(G)‎‎‎‎‎‎‎‎+\lceil‎\frac{‎\delta‎-1}{2}‎‎\rceil‎.$$
This completes the proof of Claim 2.\\
The result now follows by Claim 1 and  Claim 2. The bound is sharp for the complete graph.
\end{proof}
A graph is $K_p$-free if it does not contain the complete graph $K_p$ as a subgraph. For our next
lower bound, we use the following well-known Theorem of Tur\'{a}n \cite{t}.
\begin{theorem}
If $G$ is a $K_{r+1}$-free graph of order $n$, then
$$|E(G)|\le \frac{r-1}{2r}\cdot n^2.$$
\end{theorem}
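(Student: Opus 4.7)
The plan is to prove Tur\'{a}n's inequality by induction on $n$, using the classical idea of fixing an edge-maximal $K_{r+1}$-free graph and extracting a copy of $K_r$. For the base case $n\le r$, every graph on $n$ vertices is automatically $K_{r+1}$-free, and $\binom{n}{2}\le \frac{r-1}{2r}n^2$ holds for all $n\ge 1$, so the statement is immediate. For the inductive step I may assume $G$ is edge-maximal subject to being $K_{r+1}$-free, since adding edges can only tighten the conclusion.

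The first thing I would establish is that every edge-maximal $K_{r+1}$-free graph contains a $K_r$. If not, pick any non-adjacent pair $u,v$; then $G+uv$ contains a $K_{r+1}$, which must use the new edge, so $u$ and $v$ have $r-1$ common neighbors inducing a $K_{r-1}$ in $G$. But then $\{u\}$ together with that $K_{r-1}$ already forms a $K_r$ in $G$, a contradiction.

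Fix such a copy $K$ of $K_r$ with vertex set $W$, and partition the edges of $G$ into three classes: edges with both endpoints in $W$, edges with exactly one endpoint in $W$, and edges with both endpoints in $V(G)\setminus W$. The first class contributes exactly $\binom{r}{2}$ edges. For the second class, every vertex $x\in V(G)\setminus W$ has at most $r-1$ neighbors in $W$ (otherwise $\{x\}\cup W$ would span a $K_{r+1}$), contributing at most $(n-r)(r-1)$ edges in total. Finally, $G-W$ is still $K_{r+1}$-free on $n-r$ vertices, so the inductive hypothesis bounds the third class by $\frac{r-1}{2r}(n-r)^2$.

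Summing the three contributions yields
\begin{equation*}
|E(G)|\le \binom{r}{2}+(n-r)(r-1)+\frac{r-1}{2r}(n-r)^2 = \frac{r-1}{2r}\bigl[r^2+2r(n-r)+(n-r)^2\bigr]=\frac{r-1}{2r}\,n^2,
\end{equation*}
which is exactly the desired bound. The only delicate step is this short algebraic verification, which is transparent once $\frac{r-1}{2r}$ is factored to the front; everything else is structural. A natural alternative would be Zykov symmetrization, iteratively replacing lower-degree vertices by clones of higher-degree non-neighbors until $G$ becomes a complete multipartite graph whose part sizes one then balances, but the induction avoids that extremal-graph case analysis and delivers the inequality more directly.
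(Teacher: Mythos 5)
Your argument is correct. Note, however, that the paper does not prove this statement at all: it is Tur\'an's classical theorem, quoted with a citation to Tur\'an's 1941 paper and used as a black box in the proof of the subsequent lower bound for $K_{r+1}$-free graphs. So there is no in-paper proof to compare against; what you have supplied is a self-contained justification of the cited result. Your induction is the standard one and all steps check out: the base case $n\le r$ reduces to $\binom{n}{2}\le\frac{r-1}{2r}n^2$, equivalent to $1/r\le 1/n$; the reduction to an edge-maximal $K_{r+1}$-free graph is legitimate since adding edges only increases the left-hand side; the extraction of a $K_r$ from edge-maximality is sound (and the non-adjacent pair you need exists because in the inductive step $n>r$ forces $G\neq K_n$); and the three-way edge count together with the identity $r^2+2r(n-r)+(n-r)^2=n^2$ gives exactly the claimed bound. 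One could quibble that you only need the weak quantitative form $\frac{r-1}{2r}n^2$ rather than the sharp Tur\'an number $e(T_r(n))$, and your induction delivers precisely that weak form with minimal overhead, which is all the paper's Theorem on $K_{r+1}$-free graphs requires.
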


\begin{theorem}
Let $r\ge 2$ be an integer, and let $G$ be a $K_{r+1}$-free graph of order $n$.
If $c=\lceil(\delta(G)+1)/2\rceil$, then
$$\gamma_{st}(G)\ge \frac{r}{r-1}\left(-(c-1)+\sqrt{(c-1)^2+4\frac{r-1}{r}cn}\right)-n.$$
\end{theorem}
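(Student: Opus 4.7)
The plan is to fix a minimum STDF $f$ of $G$, set $V_{+}=\{v:f(v)=1\}$, $V_{-}=\{v:f(v)=-1\}$, and $x=|V_{+}|$, so that $\gamma_{st}(G)=2x-n$ and it suffices to lower-bound $x$. Because $f(N(v))\ge 1$ and $\deg(v)\ge\delta$, I will first observe that $|N(v)\cap V_{+}|\ge\lceil(\deg(v)+1)/2\rceil\ge c$ for every $v\in V$. Summing this inequality over $v\in V$ and reindexing as a sum over $V_{+}$ gives the key starting estimate
$$cn\;\le\;\sum_{v\in V}|N(v)\cap V_{+}|\;=\;\sum_{u\in V_{+}}\deg(u)\;=\;2|E(G_{+})|+|[V_{+},V_{-}]|,$$
where $G_{+}=G[V_{+}]$.

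Next, since $G_{+}$ inherits $K_{r+1}$-freeness from $G$, applying Tur\'an's theorem (Theorem 3.4) to $G_{+}$ yields $2|E(G_{+})|\le\frac{r-1}{r}x^{2}$. To control the cross term, I intend to bound $|[V_{+},V_{-}]|\le(c-1)x$ by arguing that $V_{-}$ plays the role of a $(c-1)$-total limited packing --- the natural analogue, on the negative side, of the total limited packing framework deployed in Section 2 (in particular in the proof of Theorem 2.1). Plugging the two estimates into the previous display produces the quadratic inequality
$$cn\;\le\;\frac{r-1}{r}\,x^{2}+(c-1)x,$$
equivalently $(r-1)x^{2}+r(c-1)x-rcn\ge 0$. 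Taking the positive root yields
$$x\;\ge\;\frac{r}{2(r-1)}\!\left(-(c-1)+\sqrt{(c-1)^{2}+\tfrac{4(r-1)}{r}cn}\right),$$
and the identity $\gamma_{st}(G)=2x-n$ produces the claimed lower bound.

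The step I expect to be the main obstacle is the cross-edge bound $|[V_{+},V_{-}]|\le(c-1)x$. The only immediately available per-vertex information is $|N(u)\cap V_{-}|\le\lfloor(\deg(u)-1)/2\rfloor$, coming from $f(N(u))\ge 1$; this quantity exceeds $c-1$ as soon as $\deg(u)>2c-1$. To push it down to $c-1$ on average over $u\in V_{+}$ one must either exploit the $(c-1)$-total limited packing structure of $V_{-}$ more globally, or combine the per-vertex estimate with the Tur\'an bound on $|E(G_{+})|$ through the identity $|[V_{+},V_{-}]|=\sum_{u\in V_{+}}\deg(u)-2|E(G_{+})|$ together with a suitable averaging argument.
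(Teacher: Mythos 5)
Your setup (working with $x=|V_+|$, aiming for the quadratic $\frac{r-1}{r}x^2+(c-1)x-cn\ge 0$, and applying Tur\'an to $G[V_+]$) matches the paper's target, but the route you propose has a genuine gap at exactly the step you flag, and that step cannot be repaired: the inequality $|[V_+,V_-]|\le (c-1)x$ is false in general. The reason is that $V_-$ is \emph{not} a $(c-1)$-total limited packing: the condition $f(N(v))\ge 1$ only gives $|N(v)\cap V_-|\le\lfloor(\deg(v)-1)/2\rfloor$, so $V_-$ is a $\lfloor(\Delta-1)/2\rfloor$-total limited packing (this is how it is used in Claim~1 of Theorem~3.3), whereas $c-1$ is defined through $\delta$. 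A concrete counterexample inside the theorem's hypotheses is $G=K_{3,101}$ with $r=2$: here $\delta=3$, $c=2$, and a minimum STDF puts $2$ positive and $1$ negative vertex on the small side and $51$ positive and $50$ negative vertices on the large side, giving $x=53$, $(c-1)x=53$, but $|[V_+,V_-]|=1\cdot 51+50\cdot 2=151$. Moreover, your symmetric starting point $cn\le\sum_{v\in V}|N(v)\cap V_+|=2|E(G_+)|+|[V_+,V_-]|$ is inherently too lossy: the only true upper bound available for the cross term is $|[V_+,V_-]|\le 2|E(G_+)|-x$ (from $|N(u)\cap V_-|\le|N(u)\cap V_+|-1$ for $u\in V_+$), and substituting it together with Tur\'an yields only $cn\le 4|E(G_+)|-x\le\frac{2(r-1)}{r}x^2-x$, which is weaker than the claimed bound by essentially a factor of $\sqrt2$ in the leading term.

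The paper's proof avoids this by treating the two sides asymmetrically. The STDF condition is used only at vertices of $V_-$ to get the \emph{lower} bound $|[V_+,V_-]|\ge c|V_-|=c(n-x)$ (Lemma~3.1(a)), and only at vertices of $V_+$ to get the \emph{upper} bound $|[V_+,V_-]|=\sum_{v\in V_+}|N(v)\cap V_-|\le\sum_{v\in V_+}(|N(v)\cap V_+|-1)=2|E(G[V_+])|-x\le\frac{r-1}{r}x^2-x$, the last step by Tur\'an. Chaining these two bounds on the single quantity $|[V_+,V_-]|$ gives $c(n-x)\le\frac{r-1}{r}x^2-x$, which is precisely the quadratic you want. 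The missing idea in your proposal is this conversion of cross-edges at $V_+$ vertices into internal edges of $G[V_+]$ (where Tur\'an bites), rather than trying to bound the cross-edges by a multiple of $x$ directly.
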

\begin{proof}
By Lemma 3.1 (a), we have
\begin{equation}
|[V_+,V_-]|\ge\left\lceil\frac{\delta(G)+1}{2}\right\rceil|V_-|=c|V_-|=c(n-|V_+|).
\end{equation}
Furthermore, Theorem 3.6 leads to
\begin{eqnarray*}
|[V_+,V_-]| & = & \sum_{v\in V_+}|N(v)\cap V_-|\le \sum_{v\in V_+}(|N(v)\cap V_+|-1)\\
& = & 2|E(G[V_+])|-|V_+|\le\frac{r-1}{r}|V_+|^2-|V_+|.
\end{eqnarray*}
Combining this inequality chain with (5), we obtain
$$(n-|V_+|)c\le\frac{r-1}{r}|V_+|^2-|V_+|$$
and thus
$$\frac{r-1}{r}|V_+|^2+(c-1)|V_+|-cn\ge 0.$$
It follows that
$$|V_+|\ge\frac{r}{2(r-1)}\left(-(c-1)+\sqrt{(c-1)^2+4\frac{r-1}{r}cn}\right),$$
and so we arrive at the desired bound
$$\gamma_{st}(G)=2|V_+|-n\ge\frac{r}{r-1}\left(-(c-1)+\sqrt{(c-1)^2+4\frac{r-1}{r}cn}\right)-n.$$
\end{proof}
For the special case that $G$ is an $r$-partite graph, Theorem 3.7 was proved by Shan and Cheng in \cite{sc}.
In that paper the authors have constructed $r$-partite graphs with equality in the inequality of
Theorem 3.7, and therefore this theorem is sharp. In addition, one can find Theorem 3.7 for triangle-free
graphs in \cite{sc}.


\end{document}